\newtheoremstyle{mytheoremstyle} %
    {\topsep}                    %
    {\topsep}                    %
    {}                           %
    {}                           %
    {\bf}                           %
    {.}                          %
    {.5em}                       %
    {}  %
\newtheoremstyle{mywoproofstyle} %
    {\topsep}                    %
    {1.3\topsep}                    %
    {}                           %
    {}                           %
    {\bf}                           %
    {.}                          %
    {.5em}                       %
    {}  %
\theoremstyle{mywoproofstyle}
\theoremstyle{mytheoremstyle}
\newtheorem{corollary}[subsection]{Corollary}
\newtheorem{definition}[subsection]{Definition}
\newtheorem{example}[subsection]{Example}
\newtheorem{lemma}[subsection]{Lemma}
\newtheorem{theorem}[subsection]{Theorem}
\renewcommand{\phi}{\varphi}
\DeclarePairedDelimiter\abs{\lvert}{\rvert}%
\DeclarePairedDelimiter\gen{\langle}{\rangle}%
\DeclarePairedDelimiterX\cgen[2]{\langle}{\rangle}%
 {#1\;\delimsize\vert\;#2}
\DeclarePairedDelimiter\set{\{}{\}}%
\DeclarePairedDelimiterX\cset[2]{\{}{\}}%
 {#1\,\delimsize\vert\,#2}
\newcommand{\myhat}[1]    {\ensuremath{\widehat{#1}}}
\DeclareMathOperator{\Aut}{Aut}
\DeclareMathOperator{\id}{id}
\DeclareMathOperator{\Out}{Out}
\DeclareMathOperator{\Soc}{Soc}
\DeclareMathOperator{\Sym}{Sym}
\begin{document}
\title{Towards Efficient Normalizers of Primitive Groups}
\author{Sergio Siccha\textsuperscript{1}}%
\date{
\textsuperscript{1}%
\href{https://orcid.org/0000-0002-2839-5265}{[0000-0002-2839-5265]}
}
\maketitle              %
\begin{abstract}
\noindent
We present the ideas behind an algorithm to compute normalizers of primitive
groups with non-regular socle in polynomial time.
We highlight a concept we developed called permutation morphisms
and present timings for a partial implementation of our algorithm.
This article is a collection of results from the author's PhD thesis.
\\[8pt]
\noindent
\textbf{Keywords:}~
normalizers $\cdot$ primitive groups $\cdot$ permutation group algorithms
\end{abstract}

\section{Introduction}
\label{sec:intro}

One of the tools to study the internal structure of groups
is the normalizer.
For two groups $G$ and $H$, which are contained in a common overgroup $K$, we
call the \emph{normalizer of $G$ in $H$},
denoted $N_H(G)$,
the subgroup of $H$ consisting of those elements that leave $G$ invariant under
conjugation.

We only consider finite sets, finite groups, and permutation groups acting on
finite sets.
We assume permutation groups to always be given by generating sets and say that
a problem for permutation groups can be solved in \emph{polynomial time},
if there exists an algorithm which, given permutation groups of degree $n$,
solves it in time bounded polynomially in $n$ and in the sizes of the given
generating sets.
While many problems for permutation groups can be solved efficiently both in
theory and in practice,
no polynomial time algorithm to compute normalizers of permutation groups is
known.

A transitive permutation group $G$ acting on a set $\Omega$ is called
\emph{primitive} if there exists no non-trivial $G$-invariant partition of
$\Omega$.
Primitive groups have a rich and well-understood structure.
Hence many algorithms use the natural recursion from general permutation groups
to transitive and in turn to primitive ones.
For two permutation groups $G, H \leq \Sym \Omega$ computing the normalizer of
$G$ in $H$ in general is done by searching for the normalizer of $G$ in the
symmetric group $\Sym \Omega$ and simultaneously computing the intersection
with $H$.
We focus on computing the normalizer of a primitive group $G \leq \Sym
\Omega$ in $\Sym \Omega$.
Being able to compute normalizers for primitive groups efficiently may lead to
improved algorithms for more general situations.

Our results build substantially on the O'Nan-Scott classification of primitive
groups, see \cite{liebeck-praeger-saxl:onan-scott}, and on the classification
of finite simple groups (CFSG).

Recall that the \emph{socle} of a group $G$, denoted $\Soc G$, is the subgroup
generated by all minimal normal subgroups of $G$.
Our theoretical main result is the following theorem.
\begin{theorem}[{\cite[Theorem~9.1]{siccha:phd-thesis}}]
\label{thm:normsym-primitive-nonab-soc-poly}
    Let a primitive group $G \leq \Sym \Omega$ with non-regular socle%
\footnote{This excludes groups of affine and of twisted wreath type.}
    be given.
    Then we can compute $N_{\Sym \Omega}(G)$ in polynomial time.
\end{theorem}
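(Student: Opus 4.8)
Since $G$ is primitive, its socle $N := \Soc G$ is transitive on $\Omega$; fix $\omega \in \Omega$ and identify $\Omega$ with the coset space $N/N_\omega$. My plan has three stages: (i) identify the O'Nan--Scott type of $G$ and the simple group $T$ with $N \isomorph T^k$; (ii) compute $M := N_{\Sym \Omega}(N)$; (iii) recover $N_{\Sym \Omega}(G)$ from $M$, using that $N$ is characteristic in $G$. Stages (i) and (iii) are the soft ones; stage (ii) is where the real work --- and the reliance on CFSG and on permutation morphisms --- sits.

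For stage (ii), recall that for a transitive group $N_{\Sym\Omega}(N) = \langle N, N_{\Sym\Omega}(N)_\omega\rangle$ with $N_{\Sym\Omega}(N)_\omega \isomorph \Aut(N, N_\omega) := \{\alpha \in \Aut N : N_\omega^{\,\alpha} = N_\omega\}$, an automorphism $\alpha$ fixing $N_\omega$ being realized as the permutation $N_\omega n \mapsto N_\omega\,\alpha(n)$ of $\Omega$. So it is enough to compute generators of $\Aut(N, N_\omega)$ together with their induced permutations of $\Omega$. Here CFSG enters: after naming $T$ by constructive recognition, one has explicit generators for $\Aut T$, namely $\Inn T \isomorph T$ together with the bounded group of diagonal, field and graph automorphisms, hence generators for $\Aut N \isomorph \Aut(T) \wr \Sym k$. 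One then cuts out the stabilizer of the subgroup $N_\omega$; this is where the O'Nan--Scott analysis is essential, since it exhibits $N_\omega$ explicitly --- a direct product $\prod_i (T_\delta)$ of point stabilizers of the factor action in the product-action cases, a diagonal subgroup $T_{\mathrm{diag}}$ in the (simple or compound) diagonal cases --- so that the condition $N_\omega^{\,\alpha} = N_\omega$ becomes a tractable condition on the components of $\alpha$ inside the single simple group $T$. Finally each generator of $\Aut(N, N_\omega)$ must be transported to an explicit permutation of $\Omega$ via coset representatives; maintaining these transported maps, and the auxiliary equivariant maps from $(G,\Omega)$ onto the action on the $k$ simple factors of $N$ and (in the product-action cases) onto the factor action on $\Delta$, is exactly the role of the permutation-morphism formalism.

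For stage (iii): because $N$ is characteristic in $G$, any permutation normalizing $G$ normalizes $N$, so $N_{\Sym\Omega}(G) = N_M(G)$; and as $N \leq G \leq M$ with $N$ normal in $M$, we get $N_M(G)/N = N_{M/N}(G/N)$, a normalizer computation in the quotient $M/N$. The quotient is comparatively small: from $\Aut(N,N_\omega)\cap\Inn N \isomorph N_N(N_\omega)$ one obtains $[M:N] \leq [N_N(N_\omega):N_\omega]\cdot|\Out N|$, the first factor is at most $|\Omega| = n$, and $|\Out N| = |\Out T|^{k}\cdot k!$. In the almost simple case ($k = 1$) this is polynomial in $n$ and one finishes with a direct normalizer search in $M/N$; in the diagonal and product-action cases $|\Out T|^{k}$ is still polynomial in $n$, and the only potentially large factor, $k!$, comes from permuting the $k$ simple direct factors of $N$ --- so I would reduce, once more via the wreath structure and permutation morphisms, to the almost simple and simple diagonal building blocks together with a normalizer computation over these $k$ factors, which is controlled because the group by which a primitive group permutes the factors of its socle is itself heavily restricted.

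The step I expect to be the main obstacle is stage (ii): turning the abstract automorphism group of $\Soc G$ --- above all its outer part --- into an explicit permutation group on $\Omega$ within polynomial time. This is precisely where the O'Nan--Scott description of the point stabilizer $N_\omega$, the constructive recognition and the explicit outer automorphisms of $T$ coming from CFSG, and the permutation-morphism bookkeeping all have to be combined; by contrast stage (iii) is essentially routine once the estimate for $[M:N]$ is in hand.
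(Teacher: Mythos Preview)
Your two-phase outline --- first compute $M := N_{\Sym\Omega}(\Soc G)$, then compute $N_M(G)$ --- is exactly the paper's plan, and your stage~(ii) is close in spirit to what the paper does (it conjugates $G$ into an explicit product action via the permutation-morphism machinery and then reads off $M = N_{\Sym\Delta}(T)\wr S_\ell$, invoking Luks--Miyazaki for the almost-simple factor $N_{\Sym\Delta}(T)$; your route through $\Aut(N,N_\omega)$ is a legitimate alternative).

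The genuine gap is in stage~(iii), which you call ``essentially routine''. You correctly isolate the $\ell!$ coming from $\Out N \cong \Out(T)\wr S_\ell$ as the only obstruction, but your proposed fix --- that the top group of a primitive $G$ on the $\ell$ socle factors is ``heavily restricted'' --- does not give a polynomial-time algorithm. The only structural restriction is that this top group be transitive; essentially any transitive subgroup of $S_\ell$ can occur, and computing its normalizer in $S_\ell$ is again a general normalizer problem. With $\ell$ of order $\log n$ a brute-force or backtrack search costs up to $\ell! = n^{\Theta(\log\log n)}$, which is quasipolynomial, not polynomial. Nothing in your sketch bridges this gap.

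The paper's missing ingredient is a specific \emph{logarithmic reduction}: one builds an explicit homomorphism $\rho\colon M \to R\wr S_\ell$, where $R$ is $\Out T$ in its regular action, so that $\rho(M)$ acts on $\lvert\Out T\rvert\cdot\ell \leq 6\log n$ points, and such that $N_M(G) = \rho^{-1}\bigl(N_{\rho(M)}(\rho(G))\bigr)$. On this logarithmic-size set one then invokes two deep black boxes: Wiebking's simply-exponential normalizer algorithm to get $N_{S_k}(\rho(G))$ in time $2^{O(\log n)} = n^{O(1)}$, and Babai's quasipolynomial graph-isomorphism algorithm for the intersection $\rho(M)\cap N_{S_k}(\rho(G))$, again polynomial since $k = O(\log n)$. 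Without naming (or replacing) these two results your stage~(iii) does not run in polynomial time. In short, you have inverted the difficulty: stage~(ii) is the more mechanical part, while stage~(iii) is where the nontrivial algorithmic input from Wiebking and Babai is needed.
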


As is often the case in computational group theory, ideas from theoretical
algorithms can be employed in practical algorithms and vice versa.
While the algorithms in \cite{siccha:phd-thesis} are primarily
theoretical ones,
we also provide probabilistic nearly-linear time versions where possible.
The author is developing the GAP package
\texttt{NormalizersOfPrimitiveGroups},
hosted at
\begin{center}
\url{https://github.com/ssiccha/NormalizersOfPrimitiveGroups}%
\footnote{may move to %
\url{https://github.com/gap-packages/NormalizersOfPrimitiveGroups}},
\end{center}
with the aim to implement practical versions of the algorithms developed in
\cite{siccha:phd-thesis}.
Until now, algorithms concerning permutation morphisms and primitive
groups of type PA are implemented.
First experiments indicate that already for moderate degrees these outperform
the GAP built-in algorithm \texttt{Normalizer} by several orders of magnitude,
see Table~\ref{tab:runtimes}.

Since no polynomial time solutions are known for the normalizer problem, the
generic practical algorithms resolve to backtracking over the involved groups
in one way or another.
The fundamental framework of modern backtrack algorithms for permutation groups
is Leon's partition backtrack algorithm~\cite{leon},
which generalizes previous backtrack approaches
\cite{butler:backtrack,butler:normalizers,holt:normalizers,sims:backtrack}
and generalizes ideas of \emph{nauty} \cite{mckay-piperno:nauty-traces} to
the permutation group setting.
Partition backtrack is implemented in GAP
\cite{gap}
and Magma
\cite{magma}.
Recently, the partition backtrack approach was generalized
to a ``graph backtrack'' framework
\cite{JPWW}.

Theißen developed a normalizer algorithm which uses orbital graphs to prune the
backtrack search \cite{theissen}.
Chang is currently developing specialized algorithms for highly
intransitive permutation groups, her PhD thesis should appear shortly.
It is to expect that the work in \cite{JPWW} can also be extended to normalizer
problems.
Hulpke also implemented normalizer algorithms in
\cite{hulpke:normalizers} using group automorphisms
and the GAP function \texttt{NormalizerViaRadical} based on
\cite{glasby-slattery:normalizers}

In Section~\ref{sec:strat} we outline the strategy behind our algorithms.
In Section~\ref{sec:onan} we recall the O'Nan-Scott Theorem.
We present our new concept of permutation morphisms in
Section~\ref{sec:permmors}.
In Section~\ref{sec:red} we sketch how we use our results
to obtain Theorem~\ref{thm:normsym-primitive-nonab-soc-poly}.
In Section~\ref{sec:implementation} we discuss our implementation.

\section{Strategy}
\label{sec:strat}
We describe the strategy of the theoretical algorithm behind
Theorem~\ref{thm:normsym-primitive-nonab-soc-poly}.
Comments regarding the implementation of its building blocks are given at the
end of each following section.

In this section let $G \leq \Sym \Omega$ be a primitive group with non-regular
socle $H$.
The normalizer of $H$ in $\Sym \Omega$ plays a central role in our algorithm,
in this section we denote it by $M$.
Observe that to compute $N_{\Sym \Omega}(G)$ it suffices to compute
$N_M(G)$ since the former is contained in $M$.

The socle $H$ is isomorphic to $T ^ \ell$ for some finite non-abelian simple
group $T$ and some positive integer $\ell$.
The group $G$ is isomorphic to a subgroup of the wreath product
$\Aut(T) \wr S_\ell$,
see Section~\ref{sec:onan} for a definition of wreath products.
By the O'Nan-Scott~Theorem
the respective isomorphism extends to an embedding%
\footnote{For twisted wreath type the situation is slightly more complicated.}
of the normalizer $M$ into $\Aut(T) \wr S_\ell$.
Furthermore $\ell$ is of the order $O(\log \abs \Omega)$.
Hence the index of $G$ in $M$, and thus also the search-space of the
normalizer computation $N_M(G)$, is tiny in comparison to the index of $G$ in
$\Sym \Omega$.

Our approach can be divided into two phases.
First we compute $M$,
this is by far the most labor intensive part.
To this end we compute a sufficiently well-behaved conjugate
of $G$, such that we can exhibit the wreath structure mentioned above.
In \cite{siccha:phd-thesis} we make this more precise and define a
\emph{weak canonical form} for primitive groups.
Using that conjugate and the O'Nan-Scott Theorem we can write down generators
for $M$.
In the second phase,
we compute a reduction homomorphism
$\rho : M \to S_k$
with $k \leq 6 \log \abs \Omega$.
After this logarithmic reduction,
we use
Daniel Wiebking's simply exponential time
algorithm
\cite{wiebking:normalizers-simply-exponential,%
wiebking:normalizers-simply-exponential-siam},
which is based on the canonization framework%
~\cite{schweitzer-wiebking:canonisation-framework},
to compute $N_{S_k}(\rho(G))$.
Note that the running time of a simply exponential time algorithm
called on a problem of size $\log n$ is
$2 ^ {O(\log n)}$
and thus is bounded by $2 ^ {c \log n} = n ^ c$ for some constant $c > 0$.
Then we use Babai's famous quasipolynomial time algorithm for graph-isomorphism
\cite{babai:graphiso-quasipolynomial,babai:graphiso-quasipolynomial-acm}
to compute the group intersection
$N_{\rho(M)}(\rho(G)) = \rho(M) \cap N_{S_k}(\rho(G))$.
Notice that since we perform these algorithms on at most $6 \log n$
points they run in time polynomial in $n$.
The homomorphism $\rho$ is constructed in such a way,
that computing the preimage of the above normalizer $N_{\rho(M)}(\rho(G))$
yields $N_M(G)$.
Recall that $N_M(G)$ is equal to $N_{\Sym \Omega}(G)$.

In our implementation we do not use the algorithms by Wiebking and Babai since
these are purely theoretical.
Instead we use the partition backtrack implemented in GAP.

\section{The O'Nan-Scott Theorem}
\label{sec:onan}

The goal of this and the next section is to illustrate
how we use the O'Nan-Scott Theorem to prove the following theorem.
In this article we limit ourselves to groups of type PA, which we define
shortly.
\begin{theorem}
[{\cite[Theorem~8.1]{siccha:phd-thesis}}]
\label{thm:normsoc-poly-primitive-nonab-soc}
    Let a primitive group $G \leq \Sym \Omega$ with non-abelian socle be given.
    Then we can compute $N_{\Sym \Omega}(\Soc G)$ in polynomial time.
\end{theorem}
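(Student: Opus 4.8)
The plan is to follow the two-phase strategy outlined in Section~\ref{sec:strat}, specialised to the task of computing $N_{\Sym\Omega}(H)$ where $H := \Soc G \cong T^\ell$. First I would appeal to the O'Nan-Scott Theorem to pin down the structure of $M := N_{\Sym\Omega}(H)$: since $H$ is non-abelian, $\Omega$ can be identified $H$-equivariantly with a set on which $H$ acts via one of the product-type actions, and $M$ embeds into $\Aut(T)\wr S_\ell$ with $\ell = O(\log\abs\Omega)$. Concretely, for type PA the point stabiliser in $H$ is a product $\prod_{i=1}^\ell R_i$ with each $R_i < T$, $\Omega$ is identified with a Cartesian power $\Delta^\ell$, and $M$ is (the image of) the full wreath product $\Aut(T)\wr S_\ell$ acting in product action on $\Delta^\ell$. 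The substantive content of the first phase is therefore: (i) recognise that $G$ is of type PA and compute the simple group $T$, the integer $\ell$, and the factorisation of $\Omega$ as $\Delta^\ell$; (ii) conjugate $G$ into a normalised position — the \emph{weak canonical form} of \cite{siccha:phd-thesis} — in which this product structure is explicit; and (iii) having done so, write down generators for $M$ directly as the generators of $\Aut(T)\wr S_\ell$ in product action, using that $\Aut(T)$ acting on $\Delta \cong T/R_1$ is available from the (bounded-size) data of the simple group $T$.

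With $M$ in hand, the second phase computes $N_M(H)$ — but $H\normal M$ by construction of $M$, so in fact $N_M(H)=M$ and $N_{\Sym\Omega}(H)=M$ already. Thus, once the first phase produces generators for $M$, there is nothing left to do: unlike the normalizer of $G$ itself (Theorem~\ref{thm:normsym-primitive-nonab-soc-poly}), the normalizer of the socle requires no reduction homomorphism, no call to Wiebking's or Babai's algorithms, and no backtrack. The theorem reduces entirely to the effective, polynomial-time construction of $M = N_{\Sym\Omega}(\Soc G)$ from $G$.

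The main obstacle is step (ii): producing the conjugating element that brings $G$ into a position where the $\ell$-fold product structure of $\Omega$ and the embedding $G \hookrightarrow \Aut(T)\wr S_\ell$ are simultaneously visible, all in polynomial time. This is where the bulk of \cite{siccha:phd-thesis} goes, and it is where the \emph{permutation morphisms} of Section~\ref{sec:permmors} do the work: one needs to detect the coordinatising congruences on $\Omega$, identify the diagonal/componentwise pieces, and control the simple group $T$ and its outer automorphisms using CFSG-based bounds so that all the associated data (permutation representations of $\Aut(T)$, the subgroups $R_i$, etc.) have polynomially bounded description. A secondary subtlety is verifying the type: one must efficiently distinguish PA from the other non-abelian-socle O'Nan-Scott types (HA is excluded, but HS, HC, SD, CD, TW with non-regular socle, and AS must be handled), since the precise form of $M$ depends on the type; in this article we restrict to PA, for which the argument above is clean.
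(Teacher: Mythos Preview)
Your overall strategy matches the paper's: reduce to the component-wise situation via permutation morphisms (this is Lemma~\ref{lem:pd}), then read off the normaliser of the socle as a wreath product (Corollary~\ref{cor:norm-of-cwise-poly}); and you are right that no Phase~2 reduction, and hence no Wiebking/Babai call, is needed for $N_{\Sym\Omega}(\Soc G)$.

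There is, however, a concrete slip in your step~(iii). You assert that $M$ is the image of the \emph{full} wreath product $\Aut(T)\wr S_\ell$ on $\Delta^\ell$, and that generators are available from ``the (bounded-size) data of the simple group $T$''. Neither claim is right. First, by \cite[Lemma~4.5A]{dixon-mortimer:permutation-groups} (quoted just before Corollary~\ref{cor:norm-of-cwise-poly}) the normaliser is $N_{\Sym\Delta}(T)\wr S_\ell$, and $N_{\Sym\Delta}(T)$ need not realise all of $\Aut(T)$: only those automorphisms of $T$ that preserve the $T$-conjugacy class of the point stabiliser $R_1$ act on $\Delta$. (For instance, take $T=A_6$ on $6$ points: the exceptional outer automorphism swaps the two classes of $A_5$ and hence does \emph{not} act on $\Delta$, so $N_{S_6}(A_6)=S_6\lneq\Aut(A_6)$.) Second, $T$ is not of bounded size --- only $\abs{\Out T}$ is small (logarithmic in $\abs\Delta$), while $\abs T$ itself can be arbitrarily large. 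The paper therefore does not simply ``write down'' $N_{\Sym\Delta}(T)$: it invokes the Luks--Miyazaki polynomial-time normaliser algorithm \cite[Corollary~3.24]{luks-miyazaki:normalisers-poly} for exactly this step. With that correction your outline coincides with the paper's proof.
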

\begin{proof}
For groups of type PA this will follow from
Corollary~\ref{cor:norm-of-cwise-poly}
and
Lemma~\ref{lem:pd}.
\end{proof}

The O'Nan-Scott Theorem classifies
how the socles of primitive groups can act,
classifies the normalizers of the socles,
and determines criteria to decide which subgroups of these normalizers act
primitively.
We follow the division of primitive groups into eight O'Nan-Scott
types
as it was suggested by L\'aszl\'o G. Kov\'acs
and first defined by Cheryl Praeger in
\cite{praeger:onan-scott-eight-types}.
In this section we define the types AS and PA and recall some of their
basic properties.
In particular we describe the normalizer of the socle for groups of type PA
and how to construct the normalizer of the socle, if the group is given in a
sufficiently well-behaved form.

The version of the O'Nan-Scott Theorem we use,
for a proof see \cite{liebeck-praeger-saxl:onan-scott},
is:
\begin{theorem}
\label{thm:onan-scott}
    Let $G$ be a primitive group on a set $\Omega$.
    Then $G$ is a group of type
    HA,
    AS, PA,
    HS, HC,
    SD, CD,
    or TW.
\end{theorem}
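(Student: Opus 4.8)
The statement is a structure theorem, so the plan is a case analysis organised entirely around the socle, following the classical argument as in~\cite{liebeck-praeger-saxl:onan-scott}. I would fix a point $\omega \in \Omega$ and use that primitivity of $G$ is equivalent to $G_\omega$ being a maximal subgroup. The first task is to pin down the socle: any minimal normal subgroup $M \normal G$ is characteristically simple, hence $M \cong T^k$ for a simple group $T$ and some $k \geq 1$; writing $M = T_1 \times \dots \times T_k$ with each $T_i \cong T$, the group $G$ permutes $\{T_1, \dots, T_k\}$ transitively by conjugation because $M$ is a minimal normal subgroup. A short centraliser argument then shows that $G$ has either a unique minimal normal subgroup or exactly two, say $M$ and $N$, and in the latter case $M \cong N$ is non-abelian and regular with $C_G(M) = N$ and $C_G(N) = M$; in every case $\Soc G$ is the product of the minimal normal subgroups. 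If $T$ is abelian then $M$ is elementary abelian, and since a nontrivial normal subgroup of a primitive group is transitive and an abelian transitive group is regular, $M$ is regular and unique, so $G$ is affine, of type HA.

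From now on $T$ is non-abelian. Fixing a non-abelian minimal normal subgroup $M$, I would study the point stabiliser inside the socle, $K := M \cap G_\omega$, noting that $M$ is regular on $\Omega$ exactly when $K = 1$. The core of the proof is to classify the possible embeddings of $K$ in $M = T_1 \times \dots \times T_k$ using the coordinate projections $\pi_i : M \to T_i$ together with the elementary structure theory of subgroups of direct products of simple groups (Scott's lemma): up to reordering the factors, there is a partition of $\{1, \dots, k\}$ so that on each block $K$ induces either a full diagonal subgroup of the corresponding sub-product or a subgroup in which every $\pi_i(K)$ is a proper subgroup of $T_i$; transitivity of $G$ on the factors forces the blocks to have a common size $b$, and primitivity of $G$ forces the behaviour (diagonal versus product) to be uniform across the blocks and the $G$-action on $\Omega$ to respect the block partition.

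The remaining types then fall out of this dichotomy. If $K = 1$, the socle is regular: if $M$ is the unique minimal normal subgroup this is type TW, where one reconstructs $G$ as a twisted wreath product, while if there is a second minimal normal subgroup it is type HS when $k = 1$ and type HC when $k > 1$. If $K \neq 1$, then $M$ must be the unique minimal normal subgroup, so $C_G(M) = 1$ and $G$ embeds into $\Aut(M) \cong \Aut(T) \wr S_k$; here $k = 1$ gives $T \leq G \leq \Aut(T)$, which is type AS, the full-diagonal case gives type SD when $b = k$ and type CD when $b < k$, and the case in which every $\pi_i(K)$ is a proper nontrivial subgroup of $T_i$ gives type PA.

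The step I expect to be the main obstacle is upgrading the combinatorial partition of the simple factors into an honest $G$-invariant Cartesian decomposition $\Omega \cong \Delta^m$ and invoking the embedding theorem for primitive groups that preserve a product structure, namely that such a group embeds into $W \wr S_m$ acting in product action with $W$ primitive of type AS or SD on $\Delta$; this is what actually produces types PA, SD and CD, rather than merely a subgroup-theoretic description of $K$, and it is where the real work lies. A secondary, more bookkeeping-style task is to check that the eight types are exhaustive and pairwise disjoint, which comes down to separating them by the isomorphism type and regularity of the socle, the value of $k$, and the number of minimal normal subgroups.
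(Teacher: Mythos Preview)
The paper does not supply its own proof of this statement; it simply cites \cite{liebeck-praeger-saxl:onan-scott}. Your outline is precisely the classical socle-based case analysis carried out in that reference, so the approach is correct and aligned with what the paper defers to. One small slip: in the $K=1$ branch you write that ``the socle is regular'', but when there are two minimal normal subgroups $M$ and $N$ the socle $M \times N$ has order $\lvert\Omega\rvert^{2}$ and is not regular---what is true, and what your case split actually uses, is that $M$ itself is regular.
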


The abbreviation AS stands for {\bf A}lmost {\bf S}imple.
A group is called \emph{almost simple} if it contains a non-abelian simple
group and can be embedded into the automorphism group of said simple group.
A primitive group $G$ is of \emph{AS type} if its socle is a
non-regular non-abelian simple group.

The abbreviation PA stands for {\bf P}roduct {\bf A}ction.
The groups of AS type form the building blocks for the groups of PA
type.
To define this type, we shortly recall the notion of wreath products and their
product action.

The \emph{wreath product} of two permutation groups $H \leq \Sym \Delta$
and $K \leq S_d$
is denoted by $H \wr K$ and defined as the semidirect product
$H ^ d \rtimes K$ where $K$ acts per conjugation on $H ^ d$ by permuting its
components.
We identify $H ^ d$ and $K$ with the corresponding subgroups of $H \wr K$
and call them the \emph{base group} and the \emph{top group}, respectively.

For two permutation groups $H \leq \Sym \Delta$ and
$K \leq S_d$ the \emph{product action} of the wreath product $H \wr K$
on the set of tuples $\Delta ^ d$ is given by letting
the base group act component-wise on $\Delta ^ d$
and letting the top group act by permuting the components of $\Delta ^ d$.

\begin{definition}
\label{def:primitive-PA}
    Let $G \leq \Sym \Omega$ be a primitive group.
    We say that $G$ is of \emph{type PA} if there exist an
    $\ell \geq 2$ and a primitive group $H \leq \Sym \Delta$ of type AS
    such that $G$ is
    permutation isomorphic to
    a group
    $\myhat G \leq \Sym \Delta ^ \ell$
    with
    \[
        \hspace{3em}
        (\Soc H) ^ \ell \leq
        \myhat G
        \leq H \wr S_\ell
    \]
    in product action on $\Delta ^ \ell$.
\end{definition}
The product action wreath products
$A_5 \wr \gen{(1,2,3)}$
and
$A_5 \wr \gen{(1,2)}$
are examples for primitive groups of type PA.

Let $\myhat G \leq \Sym (\Delta ^ \ell)$ and $H \leq \Sym \Delta$ be as in
Definition~\ref{def:primitive-PA}.
We sketch how to construct the normalizer of the socle of $\myhat G$.
Let $T := \Soc H \leq \Sym \Delta$.
Since $\myhat G$ is given acting in product action we can
read off $H$ and thus compute $T$.
By~\cite[Lemma 4.5A]{dixon-mortimer:permutation-groups} we know that
the normalizer of $\Soc \myhat G$ in $\Sym \Delta ^ \ell$ is
$N_{\Sym \Delta}(T) \wr S_\ell$.
By recent work of Luks and Miyazaki we can compute the normalizer of $T$,
in polynomial time
\cite[Corollary~3.24]{luks-miyazaki:normalisers-poly}.
More precisely this approach yields the following corollary:

\begin{corollary}
\label{cor:norm-of-cwise-poly}
    Let $G \leq \Sym( \Delta ^ \ell )$ be a primitive group of type PA
    with socle $T ^ \ell$ in component-wise action on $\Delta ^ \ell$.
    Then $N_{\Sym(\Delta ^ \ell)}(T ^ \ell)$ can be computed in polynomial
    time.
\end{corollary}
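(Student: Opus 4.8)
The plan is to reduce Corollary~\ref{cor:norm-of-cwise-poly} directly to the two ingredients just cited: the structural fact from \cite[Lemma~4.5A]{dixon-mortimer:permutation-groups} that identifies the normalizer of a component-wise power with a wreath product, and the polynomial-time normalizer computation of Luks and Miyazaki \cite[Corollary~3.24]{luks-miyazaki:normalisers-poly}. Since $G$ is given in component-wise action on $\Delta^\ell$, the first step is to \emph{recover the data} $\Delta$, $\ell$, and $T \leq \Sym\Delta$ from the input permutation group. This is a purely algorithmic bookkeeping step: the product-action structure makes the $\ell$ coordinate blocks on $\Omega = \Delta^\ell$ visible, so one can read off a block system whose blocks are indexed by $\Delta$ and a coordinate action, and then restrict $T^\ell = \Soc G$ to one coordinate to obtain a faithful copy of $T$ on $\Delta$. (The socle itself is computable in polynomial time for primitive groups via the O'Nan--Scott machinery, or one can take it as part of the given data.)

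Next I would invoke \cite[Lemma~4.5A]{dixon-mortimer:permutation-groups} to get the clean identification
\[
    N_{\Sym(\Delta^\ell)}\bigl(T^\ell\bigr) \;=\; N_{\Sym\Delta}(T) \wr S_\ell,
\]
where the right-hand side acts in product action on $\Delta^\ell$. Strictly this requires that $T$ acts on $\Delta$ with no fixed points and that its components are the minimal normal subgroups of $T^\ell$, which holds because $T = \Soc H$ is non-abelian simple and acts non-trivially (indeed transitively, since $H$ is primitive of type AS); I would spell this hypothesis check out briefly so that the lemma applies verbatim. The third step is the actual computation: apply the Luks--Miyazaki algorithm \cite[Corollary~3.24]{luks-miyazaki:normalisers-poly} to compute $N := N_{\Sym\Delta}(T)$ in time polynomial in $\abs\Delta$ (hence polynomial in $\abs\Omega = \abs\Delta^\ell$, in fact in $\abs\Delta \leq \abs\Omega$). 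Finally, I would assemble generators for $N \wr S_\ell$ in its product action on $\Delta^\ell$ from generators of $N$ together with the $\ell-1$ transpositions generating $S_\ell$, and output these as the answer; translating a generator of $N$ into the corresponding base-group element of $\Sym(\Delta^\ell)$ and translating a transposition in $S_\ell$ into a coordinate swap on $\Delta^\ell$ are both elementary and cost polynomial time.

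The one genuine subtlety — and the step I would flag as the main obstacle — is the bookkeeping that connects the abstract statement with the concrete permutation representations: making sure that ``$G$ given in component-wise action'' really does let us extract $\Delta$, $\ell$, and the permutation group $T \leq \Sym\Delta$ explicitly and efficiently, and that the product action of $N \wr S_\ell$ we write down is literally the same action on $\Delta^\ell$ in which $G$ was handed to us. Everything else is either cited wholesale (Luks--Miyazaki for the running time, Dixon--Mortimer for the structure) or is routine manipulation of generating sets. In particular, no backtracking over $\Sym\Delta$ is needed: the Luks--Miyazaki result already delivers $N_{\Sym\Delta}(T)$ in polynomial time, and the wreath product is then free.
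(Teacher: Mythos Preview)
Your proposal is correct and follows exactly the approach the paper sketches in the paragraph immediately preceding the corollary: read off $T$ from the given product action, invoke \cite[Lemma~4.5A]{dixon-mortimer:permutation-groups} to identify the normalizer as $N_{\Sym\Delta}(T)\wr S_\ell$, and compute $N_{\Sym\Delta}(T)$ via Luks--Miyazaki. The paper treats the corollary as an immediate consequence of that discussion and gives no further proof, so your additional bookkeeping about extracting $\Delta,\ell,T$ and assembling wreath-product generators just fills in details the paper leaves implicit.
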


In the practical implementation we use the GAP built-in algorithm to compute
the normalizer of $T$ in $\Sym \Delta$.
Our long-term goal is to use the constructive recognition provided by the
\texttt{recog} package~\cite{recog1.3.2}.
Computing the normalizer of $T$ in $\Sym \Delta$ is then only a matter of
iterating through representatives for the outer automorphisms of $T$.

\section{Permutation morphisms}
\label{sec:permmors}

In general a group of PA type might be given on an arbitrary set and needs only
be permutation isomorphic to a group in product action.
In this section we discuss how to construct such a permutation isomorphism:
\begin{lemma}
\label{lem:pd}
    Let $G \leq \Sym \Omega$ be a primitive group of type PA.
    Then we can compute a non-abelian simple group $T \leq \Sym \Delta$,
    a positive integer $\ell$,
    and a permutation isomorphism from $G$ to a permutation group $\myhat
    G \leq \Sym(\Delta ^ \ell)$ such that the socle of $\myhat G$
    is $T ^ \ell$ in component-wise action on $\Delta ^ \ell$.
\end{lemma}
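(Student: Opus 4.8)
The plan is to reconstruct the Cartesian product structure on $\Omega$ directly from the socle. First I would compute $N := \Soc G$; since $G$ is primitive, $N$ is its unique minimal normal subgroup, and since $G$ is of type PA it is non-abelian, so $N \isomorph T^\ell$ for a non-abelian simple group $T$ and some $\ell \ge 2$, and socles of primitive permutation groups are computable in polynomial time. Next I would decompose $N$ into its simple direct factors $N = T_1 \times \cdots \times T_\ell$, which simultaneously exhibits the integer $\ell$; this decomposition of a characteristically simple permutation group into its simple factors is again obtainable in polynomial time. Because $N$ is a \emph{minimal} normal subgroup of $G$, the group $G$ permutes $\set{T_1, \dots, T_\ell}$ transitively by conjugation.

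To produce the set $\Delta$, for each $i$ I would put $K^{(i)} := \prod_{j \ne i} T_j$ and let $\Delta_i$ be the set of $K^{(i)}$-orbits on $\Omega$. Since $T_i$ centralizes, hence normalizes, $K^{(i)}$, it permutes $\Delta_i$. Using the product action guaranteed by the definition of type PA one checks that the $K^{(i)}$-orbits are exactly the ``slices'' with fixed $i$-th coordinate, so that $T_i$ acts transitively and faithfully on $\Delta_i$, the stabilizer in $T_i$ of the orbit through a point $\omega$ equals $(T_i)_\omega$, all $\Delta_i$ have the same size $m$, and $\abs{\Omega} = m^\ell$. I would then fix $\Delta := \Delta_1$ and $T := T_1 \le \Sym \Delta$; for each $i$ I choose $g_i \in G$ conjugating $T_1$ to $T_i$, and since $g_i$ conjugates $K^{(1)}$ to $K^{(i)}$ it induces a bijection $\Delta_1 \to \Delta_i$ intertwining the $T_1$- and $T_i$-actions, which I use to identify every $\Delta_i$ with $\Delta$.

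Finally I would consider the coordinate map $\Omega \to \Delta^\ell$ sending $\omega$ to the tuple of $K^{(i)}$-orbits containing it. Writing an arbitrary point as $n\omega$ with $n = t_1 \cdots t_\ell \in N$ and using $T_i \cap K^{(i)} = 1$ shows this map is injective, hence a bijection by the count $\abs{\Omega} = m^\ell$. Conjugating $G$ through this bijection yields $\myhat G \le \Sym(\Delta^\ell)$ in which the factor $T_i$ moves only the $i$-th coordinate, acting there as $T$ on $\Delta$, while a general $g \in G$ permutes the coordinates according to its action on $\set{T_1, \dots, T_\ell}$; in particular $N$ maps onto $T^\ell$ in component-wise action. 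Since permutation isomorphisms preserve socles, $\Soc \myhat G = T^\ell$ component-wise, as claimed, and all steps are effective in polynomial time. I expect the main obstacle to be the constructive decomposition of the characteristically simple socle into its simple direct factors, together with the bookkeeping needed to identify the sets $\Delta_i$ consistently with one copy of $\Delta$; once the $K^{(i)}$-orbit structure is available, verifying that the coordinate map is a permutation isomorphism carrying the socle to standard component-wise form is routine.
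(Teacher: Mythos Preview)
Your proposal is correct and follows essentially the same route as the paper: you take the simple factors $T_i$ of the socle, use the orbits of the maximal normal subgroups $K^{(i)}=\prod_{j\ne i}T_j$ as block systems to define the coordinate projections, and then homogenize by conjugating elements of $G$ that permute the $T_i$. This is exactly the paper's ``product decomposition by minimal normal subgroups'' together with its homogenization step, only phrased directly in terms of orbits and bijections rather than in the paper's language of permutation morphisms and induced epimorphisms.
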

To this end we present the notion of \emph{permutation morphisms}
developed in \cite{siccha:phd-thesis}.
They arise from permutation isomorphisms by simply dropping the condition that
the domain map and the group homomorphism be bijections.
We illustrate how to use them to prove Lemma~\ref{lem:pd}.

\subsection{Basic Definitions}

For two maps $f : A \to B$ and $g : C \to D$ we denote by
$f \times g$ the product map
$A \times C \to B \times D, ~ (a,c) \mapsto (f(a), g(c))$.
For a right-action $\rho : \Omega \times G \to \Omega$ of a group $G$
and $g \in G$, $\omega \in \Omega$
we also denote $\rho(\omega, g)$ by $\omega ^ g$.

\begin{definition}\label{def:perm-hom}
    Let $G$ and $H$ be permutation groups on sets $\Omega$ and $\Delta$,
    respectively,
    let
    $f : \Omega \to \Delta$
    be a map,
    and let
    $\phi : G \to H$
    be a group homomorphism.
    Furthermore let $\rho$ and $\tau$ be the natural actions of $G$ and $H$ on
    $\Omega$ and $\Delta$, respectively.
    We call the pair $(f, \phi)$ a
    \emph{permutation morphism from $G$ to $H$ }
    if the following diagram commutes:
    \[
    \begin{tikzcd}[ampersand replacement=\&]
        \Omega \times G
            \ar[r, "\rho"]
            \ar[d, "f \times \phi"]
        \&
        \Omega
            \ar[d, "f"]
        \\
        \Delta \times H
            \ar[r, "\tau"]
        \&
        \Delta
    \end{tikzcd}
    \raisebox{-2em}{,}
    \]
    that is if $f(\omega ^ g) = f(\omega) ^ {\phi(g)}$
    holds for all $\omega \in \Omega$, $g \in G$.
    We call
    $\phi$ the \emph{group homomorphism of $(f, \phi)$}
    and $f$ the
    \emph{domain map of $(f, \phi)$}.
\end{definition}

It is immediate from the definition,
that the component-wise composition of two permutation morphisms again
yields a permutation morphism.
In particular we define the \emph{category of permutation groups}
as the category with all permutation groups as objects,
all permutation morphisms as morphisms,
and the component-wise composition as the composition of permutation
morphisms.
We rely on this categorical perspective in many of our proofs.

We denote a permutation morphism $F$ from a permutation group $G$ to a
permutation group $H$ by $F : G \to H$.
When encountering this notation keep in mind that $F$ itself is not a map but
a pair of a domain map and a group homomorphism.
We use capital letters for permutation morphisms.

It turns out that a
permutation morphism $F$
is a mono-, epi-, or isomorphism
in the categorical sense
if and only if both its domain map and group homomorphism are
injective, surjective, or bijective,
respectively.

For a permutation group $G \leq \Sym \Omega$ we call a map
$f : \Omega \to \Delta$ \emph{compatible with $G$} if there exists a group
homomorphism $\phi$ such that $F = (f, \phi)$ is a permutation morphism.
We say that a partition $\Sigma$ of $\Omega$ is $G$-invariant if
for all $A \in \Sigma$ and $g \in G$ we have $A ^ g \in \Sigma$.

\begin{lemma}[{\cite[Lemma~4.2.10]{siccha:phd-thesis}}]
\label{lem:perm-hom-iff-invariant-partition}
\label{lem:compatible-iff-invariant-partition}
    Let $G \leq \Sym \Omega$ be a permutation group
    and $f : \Omega \to \Delta$ a map.
    Then
    $f$ is compatible with $G$
    if and only if
    the partition
    of $\Omega$ into the non-empty fibers
    $
        \cset{ f ^ {-1}(\set{\delta}) }{ \delta \in f(\Omega) }
    $
    is $G$-invariant.
\end{lemma}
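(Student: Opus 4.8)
The plan is to prove the two implications separately, the forward one by a direct chase of the permutation-morphism identity and the backward one by building a suitable homomorphism out of the invariance hypothesis. For the forward direction, assume $(f, \phi)$ is a permutation morphism from $G$ into some $H \leq \Sym \Delta$, so that $f(\omega ^ g) = f(\omega) ^ {\phi(g)}$ for all $\omega \in \Omega$, $g \in G$. Fix $g \in G$ and a non-empty fiber $A = f ^ {-1}(\{\delta\})$. For every $\omega \in A$ the identity gives $f(\omega ^ g) = \delta ^ {\phi(g)}$, a value not depending on the choice of $\omega$ in $A$; hence $A ^ g \subseteq f ^ {-1}(\{\delta ^ {\phi(g)}\})$. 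Applying the same reasoning to $g ^ {-1}$ and the fiber $f ^ {-1}(\{\delta ^ {\phi(g)}\})$, and using that $\phi(g ^ {-1}) = \phi(g) ^ {-1}$, yields $\big(f ^ {-1}(\{\delta ^ {\phi(g)}\})\big) ^ {g ^ {-1}} \subseteq A$, i.e. the reverse inclusion. Thus $A ^ g = f ^ {-1}(\{\delta ^ {\phi(g)}\})$, which is again a non-empty fiber since it contains $\omega ^ g$, and the fiber partition is $G$-invariant.

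For the backward direction, I would start from the assumption that $\Sigma := \cset{f ^ {-1}(\{\delta\})}{\delta \in f(\Omega)}$ is $G$-invariant and construct $\phi$ explicitly. For $g \in G$ define a map $\bar g$ on $f(\Omega)$ by choosing, for each $\delta \in f(\Omega)$, some $\omega \in f ^ {-1}(\{\delta\})$ and setting $\delta ^ {\bar g} := f(\omega ^ g)$. The $G$-invariance of $\Sigma$ is exactly what makes this well defined: if $\omega, \omega'$ lie in the same fiber $f ^ {-1}(\{\delta\})$, then $\omega ^ g$ and $(\omega') ^ g$ both lie in the single block $\big(f ^ {-1}(\{\delta\})\big) ^ g \in \Sigma$, hence have the same image under $f$. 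One then checks $\overline{gh} = \bar g\, \bar h$ by writing $\delta ^ {\overline{gh}} = f((\omega ^ g) ^ h)$ and observing that $\omega ^ g$ lies in the fiber over $\delta ^ {\bar g}$, and that $\bar g$ is a bijection of $f(\Omega)$ with inverse $\overline{g ^ {-1}}$. Extending each $\bar g$ to a permutation $\phi(g)$ of all of $\Delta$ by fixing $\Delta \setminus f(\Omega)$ pointwise gives a homomorphism $\phi : G \to \Sym \Delta$; with $H := \phi(G)$ the pair $(f, \phi)$ satisfies $f(\omega ^ g) = f(\omega) ^ {\bar g} = f(\omega) ^ {\phi(g)}$, so it is a permutation morphism and $f$ is compatible with $G$.

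The only genuine obstacle is the well-definedness of $\bar g$ in the backward direction, which is the single point at which the invariance hypothesis is used and which carries the whole content of the lemma; verifying the homomorphism property, the bijectivity of $\bar g$, and the extension to $\Delta$ is routine, the only bookkeeping being consistency with the right-action convention for composing permutations.
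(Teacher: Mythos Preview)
Your argument is correct. The forward direction is a direct unwinding of the defining identity $f(\omega^g) = f(\omega)^{\phi(g)}$, and in the backward direction the one nontrivial step---well-definedness of $\bar g$---is exactly where the invariance hypothesis enters, as you note; the remaining checks (homomorphism property, bijectivity, extension by the identity on $\Delta \setminus f(\Omega)$) are routine and you handle them cleanly.

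As for comparison with the paper: the paper does not actually prove this lemma. It states the result and cites \cite[Lemma~4.2.10]{siccha:phd-thesis} for the proof, so there is no in-paper argument to compare against. Your proof is the natural one and is what one would expect the cited thesis proof to look like; the construction of $\bar g$ from the induced action on blocks of a $G$-invariant partition is the standard mechanism, and nothing more elaborate is needed here.
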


If $G$ is transitive, then the $G$-invariant partitions of $\Omega$ are
precisely the block systems of $G$.
Hence for a given blocksystem we can define a compatible map $f$ by sending
each point to the block it is contained in.

Let $G \leq \Sym \Omega$ be a permutation group
and $f : \Omega \to \Delta$ a surjective map compatible with $G$.
Then there exist a unique group $H \leq \Sym \Delta$
and a unique group homomorphism $\phi : G \to H$
such that $F := (f, \phi)$ is a permutation epimorphism,
see \cite[Corollary~4.2.7]{siccha:phd-thesis}.
We call $F$ the \emph{permutation epimorphism}
and $\phi$ the
\emph{group epimorphism of $G$ induced by $f$}.

\begin{example}
\label{exa:induced-perm-epi}
    Let $\Omega = \set{1, \ldots, 4}$,
    $a := (1,2)(3,4)$, $b := (1,3)(2,4)$,
    and
    $V := \gen{ a, b }$.
    Further consider the set
    $\Omega_1 := \set{1,2}$, the map
    $
        p_1 : \Omega \to \Omega_1,~
            1, 3 \mapsto 1,~
            2, 4 \mapsto 2,
    $
and the following geometric arrangement of the points $1, \ldots, 4$:
\vspace{-10pt}
\begin{figure}[H]
\centering
\begin{tikzpicture}
    \foreach \x in {1, ..., 2}
        \foreach \y in {1, ..., 2}
            \draw [fill] (\x, \y) circle [radius=0.050];
    \node [above left] at (1, 2) {{$1$}};
    \node [above left] at (2, 2) {{$2$}};
    \node [above left] at (1, 1) {{$3$}};
    \node [above left] at (2, 1) {{$4$}};
\end{tikzpicture}
\end{figure}
\vspace{-10pt}
    \noindent
    Observe that $a$ acts on $\Omega$ by permuting the points horizontally, while
    $b$ acts on $\Omega$ by permuting the points vertically.
    The map $p_1$ projects $\Omega$ vertically or ``to the top''.
    Notice how the fibers of $p_1$ correspond to a block-system of $V$.
    We determine the group epimorphism $\pi_1$ of $V$ induced by $p_1$.
    By definition $\pi_1(a)$ is the permutation which makes the following
    square commute:
    \[
    \begin{tikzcd}[ampersand replacement=\&]
        \Omega
            \ar[r, "a"]
            \ar[d, "p_1"]
        \&
        \Omega
            \ar[d, "p_1"]
        \\
        \Omega_1
            \ar[r, "\pi_1(a)"]
        \&
        \Omega_1
    \end{tikzcd}
    \]
    Take $1 \in \Omega_1$.
    We have $p_1 ^ {-1}(\set{1}) = \set{1,3}$,
    $a(\set{1,3}) = \set{2,4}$,
    and
    $p_1(\set{2,4}) = \set{2}$.
    Hence
    $\pi_1(a) = (1,2)$.
    Correspondingly we get $\pi_1(b) = \id_{\Omega_1}$.
\end{example}
\vspace{1em}

\subsection{Products of permutation morphisms}

For two permutation groups $H \leq \Sym \Delta$ and $K \leq \Sym \Gamma$
we define the \emph{product of the permutation groups $H$ and $K$}
as the permutation group
given by $H \times K$
in component-wise action
on $\Delta \times \Gamma$.
Correspondingly,
for an additional permutation group
$G \leq \Sym \Omega$
and two permutation morphisms $(f, \phi)$ and $(g, \psi)$ from
$G$ to $H$ and $K$, respectively,
we define the \emph{product permutation morphism $G \to H \times K$}
as $(f \times g, \phi \times \psi)$.
Iteratively, we define the product of several permutation groups or
permutation morphisms.

To prove Lemma~\ref{lem:pd} it suffices to be able
to compute the following:
given the socle $H \leq \Sym \Omega$ of a PA type group
compute a non-abelian simple group $T \leq \Sym \Delta$
and permutation epimorphisms, think projections,
$P_1, \ldots, P_\ell : H \to T$
such that the product morphism
$P : H \to T ^ \ell$ is an isomorphism.
Since every surjective map compatible with $H$ induces a unique permutation
epimorphism,
it in turn suffices to compute suitable maps $p_i : \Omega \to \Delta$.

\begin{example}
Consider the situation in Example~\ref{exa:induced-perm-epi}.
Let $P_1 := (p_1, \pi_1)$ and $\Omega_2 := \set{1,3}$.
Then the map $p_2 : \Omega \to \Omega_2,~ 1,2 \mapsto 1,~3,4 \mapsto 3$
is compatible with $V$ and induces the permutation epimorphism
$P_2 : V \to \gen{(1,3)}$.
The product maps $p_1 \times p_2 : \Omega \to \Omega_1 \times \Omega_2$
and $P_1 \times P_2 : V \to \gen{(1,2)} \times \gen{(1,3)}$
are isomorphisms of sets and permutation groups, respectively.
\end{example}
We illustrate how to construct one of the needed projections for PA type
groups.

\begin{example}
Let $\Delta = \set{1, \ldots, 5}$
and
$H := A_5 \times A_5 \leq \Sym(\Delta ^ 2)$.
We denote by $\mathbf 1_\Delta$ the trivial permutation group on $\Delta$.
The subgroup $H_1 := A_5 \times \mathbf 1_\Delta \leq \Sym(\Delta ^ 2)$
is normal in $H$.
Let us denote sets of the form
$\cset{ (\delta, x_2) }{ \delta \in \Delta }$ by $\set{(\ast, x_2)}$.
Then partitioning $\Delta ^ 2$ into orbits under $H_1$ yields the block system
\[
    \Sigma =
    \cset{ \set{ ( \ast, \delta_2 ) } }{ \delta_2 \in \Delta }.
\]

Note how mapping each $x \in \Delta ^ 2$ to the block of $\Sigma$ it is
contained in
is equivalent to
mapping each $x$ to $x_2$.
Thus we have essentially constructed the map
$p_2 : \Delta ^ 2 \to \Delta,~ x \mapsto x_2$.
Observe that we only used the group theoretic property that $H_1$ is a maximal
normal subgroup of $H$ and thus in particular did not use the actual product
structure of $\Delta ^ 2$.

Analogously we can construct the map
$p_1 : \Delta ^ 2 \to \Delta,~ x \mapsto x_1$.
For $i = 1, 2$ let $P_i : H \to A_5$ be the permutation epimorphisms of
$H$ induced by $p_1$ and $p_2$, respectively.
Since $p_1 \times p_2$ is an isomorphism,
$P_1 \times P_2$ must be a monomorphism.
By order arguments $P_1 \times P_2$ is thus an isomorphism.
\end{example}

In general the above construction does not yield permutation epimorphisms with
identical images.
We can alleviate this by computing elements of the given
group which conjugate the minimal normal subgroups of its socle to each other.
For the general construction see the
\emph{(homogenized) product decomposition by minimal normal subgroups}
in \cite[Definitions~5.1.3 and~5.1.5]{siccha:phd-thesis}.
Lemma~\ref{lem:pd} then follows from
\cite[Corollary~5.19]{siccha:phd-thesis}.

\section{Reduction Homomorphism}
\label{sec:red}

Recall from Section~\ref{sec:strat} that a key ingredient of our second phase
is a group homomorphism which reduces the original problem on $n$ points to a
problem on less or equal than $6 \log n$ points.
We illustrate shortly how to construct this homomorphism,
for the details refer to \cite[Theorem~9.1.6]{siccha:phd-thesis}.

Let $G \leq \Sym \Omega$ be a primitive group with non-regular socle
and $T \leq \Sym \Delta$ be a
\emph{socle-component of $G$}, confer
\cite[Chapters~5~and~7]{siccha:phd-thesis} for a definition.
Then $T$ is a non-abelian
simple group,
there exists a positive integer $\ell$
such that $\Soc G$ is isomorphic to $T ^ \ell$,
and
by
\cite[Lemma~2.6.1]{siccha:phd-thesis}
we have
$\abs \Omega = \abs \Delta ^ s$ for some $s \in \set{\ell / 2, \ldots, \ell}$.
Denote by $R$ the permutation group induced by the right-regular action of
$\Out T$ on itself.
We show that we can evaluate the following two group
homomorphisms:
first an embedding
$N_{\Sym \Omega}(\Soc G) \to \Aut T \wr S_\ell$
and second an epimorphism
$\Aut T \wr S_\ell \to R \wr S_\ell$,
where $R \wr S_\ell$ is the imprimitive wreath product
and thus acts on $\abs R \cdot \ell$ points.

We sketch the proof that $\abs R \cdot \ell \leq 6 \log n$.
Let $m := \abs \Delta$ and $r := \abs R$.
Note that for $\ell$ we have $\ell \leq 2 s = 2 \log_m n$.
Since $R$ is regular, we have $r = \abs {\Out T}$.
Since $T$ is a socle-component of $G$, we have $\abs{\Out T} \leq 3 \log m$
by \cite[Lemma~7.7]{guralnick-maroti-pyber:normalizers-primitive-groups}.
In total we have
$r \cdot \ell \leq 3 \log m \cdot 2 \log_m n = 6 \log n$.

In our implementation we use a modified version of this reduction.
For groups of type PA we can directly compute an isomorphism from the product
action wreath product into the corresponding imprimitive wreath product.

\section{Implementation}
\label{sec:implementation}
A version of our normalizer algorithm
for groups of type PA is implemented in the GAP package
\texttt{NormalizersOfPrimitiveGroups}.

Table~\ref{tab:runtimes} shows a comparison of runtimes of our algorithm
and the GAP function \texttt{Normalizer}.
At the time of writing, there are two big bottlenecks in the implementation.
First, the GAP built-in algorithm to compute the socle of a group is
unnecessarily slow.
State-of-the-art algorithms as in~%
\cite{cannon-holt:chief-series-composition-series-socle}
are not yet implemented.
Secondly, computing a permutation which transforms a given product
decomposition into a so-called natural product decomposition currently also is
slow.
The latter may be alleviated by implementing the corresponding routines in
for example C \cite{c-lang} or Julia \cite{julia-lang}.
Note that the actual normalizer computation inside the normalizer of the socle
appears to be no bottleneck:
in the example with socle type $(A_5) ^ 7$ it took only 40ms!

\begin{table}
\centering
\caption{Table with runtime comparison.}\label{tab:runtimes}
\begin{tabular}{rrrr}
~~Socle type & ~~~~Degree & ~~Our algorithm & ~~GAP built-in alg. \\
\hline
\\
$(A_5)^ 2$         &   25      &     24ms   &    200ms     \\
$(A_5)^ 3$         &   125     &     50ms   &   1500ms     \\
$(A_5)^ 4$         &   625     &    300ms   &  29400ms       \\
$(A_5)^ 7$         &   78125   &    67248ms &       --       \\
\\

$PSL(2, 5) ^ 2$   &   36      &     40ms   &     300ms     \\
$PSL(2, 5) ^ 3$   &   216     &     90ms   &    1900ms     \\
$PSL(2, 5) ^ 4$   &   1296    &    400ms   &   64000ms     \\
\\

$(A_7)^ 2$         &   49      &     38ms   &    900ms     \\
$(A_7)^ 3$         &   343     &    200ms   &  16800ms       \\
$(A_7)^ 4$         &   2401    &   1400ms   & 839000ms     \\
\end{tabular}
\end{table}

\section{Acknowledgments}

Substantial parts of the work presented in this article
were written while the author was supported
by the
German Research Foundation (DFG) research
training group ``Experimental and constructive algebra''
(GRK 1632)
and employed
by the
Lehr- und Forschungsgebiet Algebra, RWTH Aachen University
and  the
Department of Mathematics, University of Siegen.

\printbibliography

\end{document}